\newtheorem{ErdPom}{Theorem}
\newtheorem{generalinequality}[ErdPom]{Lemma}
\newtheorem{ErdPombounds}[ErdPom]{Lemma}
\begin{document}

\vspace*{-2cm}

\Large
 \begin{center}
On the length of an interval that contains distinct multiples of the first $n$ positive integers \\ 

\hspace{10pt}

\large
Wouter van Doorn \\

\hspace{10pt}

\end{center}

\hspace{10pt}

\normalsize

\vspace{-25pt}

\centerline{\bf Abstract}
Confirming a conjecture by Erd\H os and Pomerance, we prove that there exist intervals of length $\frac{cn\log n}{\log \log n}$ that do not contain distinct multiples of $1, 2, \ldots, n$. 

\section{Introduction}
Define $f(n, m)$ to be the least integer so that the interval $(m, m + f(n, m)]$ contains $n$ distinct integers $a_1, a_2, \ldots, a_n$ such that $i$ divides $a_i$ for all $i$. Erd\H os and Pomerance conjectured in \mbox{\cite{3}} that $\max_m f(n, m) - f(n, n)$ goes to infinity with $n$. In \mbox{\cite{2}} Erd\H os even offered $1000$ rupees for a solution, and it is now listed as (part of) problem \#711 at Bloom's website \cite{1}. In this short note we will settle their conjecture in the affirmative by proving the following theorem.

\begin{ErdPom} \label{main}
We have the lower bound $$\max_m f(n, m) - f(n, n) > \frac{0.36n \log n}{\log \log n}$$ for all large enough $n \in \mathbb{N}$. In particular, if $n$ is sufficiently large, then an interval of length $\frac{0.36n \log n}{\log \log n}$ exists that does not contain distinct multiples of $1, 2, \ldots, n$. 
\end{ErdPom}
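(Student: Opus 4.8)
The plan is to prove the lower bound through Hall's theorem. View the integers of $(m, m+L]$ as the right vertices of a bipartite graph whose left vertices are $1, 2, \ldots, n$, joining $i$ to every multiple of $i$ in the interval; a valid choice of distinct $a_1, \ldots, a_n$ is then exactly a matching saturating the left side. Hence $f(n, m) > L$ precisely when some set $S \subseteq \{1, \ldots, n\}$ has fewer than $|S|$ neighbours, i.e. when the number of integers of $(m, m+L]$ divisible by at least one element of $S$ is less than $|S|$. I would specialise to the up-set $S = \{y+1, y+2, \ldots, n\}$, so that $|S| = n - y$ and the deficiency condition becomes
$$\#\{a \in (m, m+L] : a \text{ has a divisor } d \text{ with } y < d \le n\} < n - y,$$
reducing everything to locating an interval that is unusually poor in integers having a divisor in the window $(y, n]$.

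First I would dispose of the subtracted term by recording the cheap bound $f(n, n) = O(n)$: for $m = n$ one checks directly — this being the content of the stated bounds lemma — that every $i \le n$ can be sent to a multiple of $i$ inside $(n, Cn]$ for an absolute constant $C$, so $f(n,n)$ is of strictly smaller order than $\frac{0.36\,n\log n}{\log\log n}$. It therefore suffices to exhibit a single offset $m$ with $f(n, m) > \frac{0.36\,n\log n}{\log\log n} + O(n)$, and the difference in the theorem is governed entirely by this one interval.

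The heart of the argument is the construction of $m$. A naive interval is useless: once $L \ge n$ every $d \le n$ already has a multiple in $(m, m+L]$, and for $S = (y, n]$ the useful integers counted with multiplicity number about $\sum_{y < d \le n} L/d \approx L\log(n/y)$, so even ignoring coincidences the deficiency condition only rules out $L$ up to roughly $(n-y)/\log(n/y) \le n$. To cross the barrier $L \approx n$ and gain the factor $\frac{\log n}{\log\log n}$, I would choose $m$ by the Chinese Remainder Theorem so that those integers of the interval that do carry a divisor in $(y, n]$ are forced to carry many of them at once — each such integer absorbing up to about $\frac{\log n}{\log\log n}$ of the moduli $d \in (y,n]$, this being the largest number of distinct prime factors an integer below $n$ can have. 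The number of $\emph{distinct}$ useful integers then collapses far below the incidence total $L\log(n/y)$, and optimising over the threshold $y$ (of order $n$) and the residue class of $m$ is what I expect to produce both the shape $\frac{n\log n}{\log\log n}$ and the explicit constant $0.36$.

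The step I expect to be the genuine obstacle is exactly this density estimate: verifying that a CRT-chosen $m$ really does force
$$\#\{a \in (m, m+L] : a \text{ has a divisor in } (y, n]\} < n - y$$
for $L$ as large as $\frac{0.36\,n\log n}{\log\log n}$. This is a statement about the clustering of integers with a prescribed divisor, close in spirit to the Erd\H os–Ford–Tenenbaum circle of results on $H(x,y,z)$, and it is here that the general inequality recorded earlier must do the quantitative work, converting the combinatorial deficiency into a sum over the moduli $d \in (y,n]$ that can be controlled by Mertens-type estimates. Bounding that sum, rather than the Hall reformulation or the comparison with $f(n,n)$, is where the real difficulty lies.
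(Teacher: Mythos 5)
Your Hall's-theorem reformulation is sound, and the reduction to exhibiting a single bad offset $m$ is the right first move, but the proposal stops exactly where the theorem begins. The deferred ``density estimate'' --- that some $m$ makes $\#\{a \in (m, m+L] : a \text{ has a divisor in } (y,n]\} < n-y$ with $L = \frac{0.36\, n\log n}{\log\log n}$ --- is not a technical verification; it \emph{is} the full content of the result, and the mechanism you sketch for it provably fails. Since each $d \in (y,n]$ has at least $\lfloor L/n\rfloor$ multiples in the interval, deficiency forces the average element of the union to absorb more than $\lfloor L/n\rfloor \approx 0.36 \frac{\log n}{\log\log n}$ moduli. Your justification that this is attainable (``the largest number of distinct prime factors an integer below $n$ can have'') conflates $\omega$ of integers below $n$ with the number of divisors lying in $(y,n]$ of elements of $(m,m+L]$, which are enormous once $m$ is chosen by CRT. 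Worse, consider the extreme CRT choice, forcing every $d \in (y,n]$ to divide one common point $a_0$: the remaining multiples $a_0 + jd$ with $1 \le |j| \le L/d$ are then pinned, and their distinct values correspond to the distinct products $jd$ of a multiplication table with $j$ ranging up to about $\frac{\log n}{\log\log n}$ and $d$ over $(y,n]$. By the Erd\H{o}s--Ford multiplication-table phenomenon, such a table saves only a factor polynomial in $\log\log n$ over the trivial count, far short of the factor $\asymp \frac{\log n}{\log\log n}$ you need, so the union still has size $\gg n-y$. Two further problems: nothing guarantees that a deficient set witnessing the theorem can be taken of the form $(y,n]$ at all, and your hope that ``the general inequality recorded earlier'' converts the deficiency into a sum over moduli misreads Lemma~\ref{genineq}, which is a relation between values of $f$ and says nothing about divisor counts in intervals.

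Separately, your ``cheap bound'' $f(n,n) = O(n)$ is false and contradicts the very lemma you cite for it: Lemma~\ref{erdpombounds} states $f(n,n) > \left(\frac{2}{\sqrt{e}}+o(1)\right) n\sqrt{\frac{\log n}{\log\log n}}$, so for no constant $C$ can all $i \le n$ receive distinct multiples inside $(n, Cn]$; the superlinearity of $f(n,n)$ is precisely the hard Erd\H{o}s--Pomerance result. (What you actually need, $f(n,n) = o\left(\frac{n\log n}{\log\log n}\right)$, does follow from the upper bound $f(n,n) < (2+o(1))n\sqrt{\log n}$ in the same lemma, so this error is repairable.) For contrast, the paper never constructs a deficient interval at all. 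It proves the elementary inequality $kn + f(kn,kn) \le k^2n + f(n,k^2n)$ of Lemma~\ref{genineq} --- map each $i \in (n,kn]$ to $ki \in (kn,k^2n]$ and use the defining property of $f(n,k^2n)$ on $[1,n]$ --- then sets $k = \left\lceil 0.6\sqrt{\log n/\log\log n}\,\right\rceil$ and plays the two halves of Lemma~\ref{erdpombounds} against each other: the lower bound gives $f(kn,kn) > (2+\epsilon)k^2n$, the upper bound gives $f(n,n) < \epsilon k^2 n$, whence $\max_m f(n,m) \ge f(n,k^2n) \ge kn + f(kn,kn) - k^2n > k^2n + \epsilon k^2 n > \frac{0.36\,n\log n}{\log\log n} + f(n,n)$. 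The deep analytic input (the $\frac{2}{\sqrt{e}}$ lower bound) is quoted from Erd\H{o}s--Pomerance rather than reproven; your plan, if it could be completed at all, would amount to reproving a strengthened form of it from scratch.
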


We note that the second sentence of Theorem \ref{main} immediately follows from the first, as we trivially have $f(n, n) \ge 0$. 

\section{Proof of Theorem \ref{main}}
The proof of Theorem \ref{main} is based on the following fairly straight-forward, but surprisingly powerful, lemma.

\begin{generalinequality} \label{genineq}
For all positive integers $k$ and $n$ we have 

\begin{equation} \label{geninq}
kn + f(kn, kn) \le k^2n + f(n, k^2n). 
\end{equation}
\end{generalinequality}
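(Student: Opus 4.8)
The plan is to unwind the definition of $f$ and reduce inequality \eqref{geninq} to a single explicit construction. Write $g = f(n, k^2 n)$. By definition of $f$, the interval $(k^2 n, k^2 n + g]$ contains distinct integers $b_1, \ldots, b_n$ with $i \mid b_i$ for every $i \le n$. Unwinding the left-hand side, the claimed bound $kn + f(kn, kn) \le k^2 n + g$ is equivalent to $f(kn, kn) \le (k^2 n + g) - kn$, and since $f(kn,kn)$ is the \emph{least} admissible length, this will follow as soon as I exhibit $kn$ distinct integers $a_1, \ldots, a_{kn}$, all lying in $(kn,\, k^2 n + g]$, with $m \mid a_m$ for every $m \le kn$.

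I would build these $a_m$ by treating small and large indices separately. For the small indices $m = 1, \ldots, n$ I simply recycle the given solution, setting $a_m = b_m$; these are distinct multiples of $m$ lying in $(k^2 n,\, k^2 n + g]$. For the large indices $j = n+1, \ldots, kn$ the key observation is that multiplying the index range $(n, kn]$ by $k$ produces exactly $(kn, k^2 n]$, so I can take $a_j = kj$: this is a multiple of $j$, the map $j \mapsto kj$ is injective, and $n < j \le kn$ forces $kn < kj \le k^2 n$. This is precisely the point at which the factor $k^2 n$ on the right-hand side of \eqref{geninq} enters, and it explains why the quantity is evaluated at $k^2 n$ rather than anywhere else.

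It then remains to check that the two families do not overlap, which is immediate from the ranges: every $a_j = kj$ satisfies $a_j \le k^2 n$, whereas every $a_m = b_m$ satisfies $a_m > k^2 n$, so the $kj$ and the $b_m$ occupy disjoint parts of $(kn,\, k^2 n + g]$ and all $kn$ integers are distinct. This produces the required multiples and yields $f(kn, kn) \le k^2 n - kn + g$, which rearranges to \eqref{geninq}. I do not expect a genuine obstacle here: once one spots the scaling $a_j = kj$ the argument is essentially forced. The only subtlety to keep straight is that this scaling cannot be used for the indices $m \le n$, since there $km \le kn$ lands outside the target interval $(kn, k^2 n]$; it is exactly these indices that must be covered by recycling $b_1, \ldots, b_n$ near $k^2 n$, which is what the lemma is really exploiting.
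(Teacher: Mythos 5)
Your proposal is correct and follows essentially the same argument as the paper: split the indices at $n$, send each large index $j \in (n, kn]$ to $kj \in (kn, k^2n]$, and cover the indices $i \le n$ by the distinct multiples guaranteed in $(k^2n, k^2n + f(n,k^2n)]$, noting the two ranges are disjoint. The only cosmetic difference is that you phrase the reduction explicitly via the minimality of $f(kn,kn)$, which the paper leaves implicit.
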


\begin{proof}
Replacing both $n$ and $m$ in the definition of $f(n, m)$ by $kn$, we need to show that for every $1 \le i \le kn$ there is a multiple $a_i$ of $i$ with $a_i \in (kn, k^2n + f(n, k^2n)]$, where all $a_i$ are distinct. Now, for every $i \in (n, kn]$ we simply choose $a_i = ki \in (kn, k^2n]$, which is certainly divisible by $i$, while all $a_i$ are distinct as $k$ is non-zero. On the other hand, by the definition of $f(n, m)$ with $m = k^2n$, it is for all $i \in [1, n]$ possible to choose distinct multiples $a_i \in (k^2n, k^2n + f(n, k^2n)]$. By combining the disjoint intervals we conclude that all $a_i$ are indeed contained in $(kn, k^2n + f(n, k^2n)]$.
\end{proof}

To apply Lemma \ref{genineq}, we will need lower and upper bounds on $f(n, n)$.

\begin{ErdPombounds} \label{erdpombounds}
For all sufficiently large $n \in \mathbb{N}$ we have the following inequalities:

$$\left(\frac{2}{\sqrt{e}}+o(1)\right)n \sqrt{\frac{\log n}{\log \log n}} < f(n, n) < \big(2 + o(1)\big)n \sqrt{\log n}.$$
\end{ErdPombounds}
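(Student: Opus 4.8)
The plan is to recast the problem as a bipartite matching and invoke Hall's theorem. Consider the graph $G$ whose left vertices are $1,2,\ldots,n$, whose right vertices are the integers of $(n,n+L]$, and in which $i$ is joined to $m$ exactly when $i \mid m$. A valid system of distinct multiples is precisely a matching saturating the left side, so $f(n,n)$ is the least $L$ for which Hall's condition holds. For $A \subseteq \{1,\ldots,n\}$ let $N(A)$ be its neighbourhood; since an integer $m$ has \emph{no} neighbour in $A$ iff every divisor of $m$ lying in $[1,n]$ belongs to $B := \{1,\ldots,n\}\setminus A$, the deficiency form of Hall's theorem rearranges to
$$ f(n,n) = n + \max_{B \subseteq [1,n]} \big( C_B - |B| \big), $$
where $C_B$ counts the $m \in (n,n+L]$ all of whose divisors in $[1,n]$ lie in $B$ (the identity being read self-consistently, as $C_B$ depends on $L$). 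Both inequalities then become two-sided control of $\max_B(C_B-|B|)$, i.e.\ an understanding of how the divisors of integers in the long interval $(n,n+L]$ (of multiplicative width $\lambda := L/n \gg 1$) distribute below $n$.

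For the lower bound I would exhibit one well-chosen $B$ and estimate $C_B$ from below. The cheap choices are too weak: $B=\{1\}$ captures only the primes, and an initial segment $B=[1,z]$ merely recovers the interval sets $A=(z,n]$, both giving no more than $f(n,n)\ge(1+o(1))n$. The gain must come from integers with a controlled multiplicative structure — those whose passage beyond $n$ is carried by a large prime factor, so that their divisors below $n$ are few and $B$-admissible while the smooth part varies. Counting such $m$ in $(n,n+L]$ reduces to a Sathe--Selberg / Poisson-type estimate in which integers with about $k$ prime factors contribute a factor $\tfrac{(\log\log n)^{k}}{k!}$. I anticipate that balancing $C_B$ against $|B|$ pins the width at $\lambda \asymp \sqrt{\log n/\log\log n}$, while the constant $\tfrac{2}{\sqrt{e}}$ drops out of the optimisation once Stirling's formula $k!\sim (k/e)^{k}\sqrt{2\pi k}$ is applied to the prime-factor count. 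The main obstacle is extracting this constant: the lower estimate for the number of these structured integers in the interval must be tight to within a factor $1+o(1)$, which forces an honest choice of the prime ranges and a careful treatment of the optimisation rather than mere orders of magnitude.

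For the upper bound I would instead verify Hall's condition at $L=(2+o(1))n\sqrt{\log n}$, which by the displayed identity means proving the \emph{uniform} bound $C_B-|B|\le (2+o(1))n\sqrt{\log n}$ for \emph{every} $B$. Equivalently one shows, uniformly, that the interval contains enough integers that \emph{do} have a divisor in a prescribed range; the worst case is controlled by the scarcity of integers possessing a divisor in a dyadic window $[s,2s)$ at scales $s$ up to about $\sqrt{\log n}$, and optimising the window location yields the factor $\sqrt{\log n}$ and the constant $2$ (here without the $\sqrt{\log\log n}$ saving that the structured family gives in the lower bound). The same estimate could be obtained constructively, assigning $a_i=ib_i$ with the cofactors $b_i$ chosen block-by-block so that collisions are bounded by the number of integers with a divisor in the relevant range.

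The crux of the whole lemma is thus the divisor-distribution input: sharp two-sided counts, in a long interval, of integers whose divisors are confined to (respectively avoid) a prescribed set. This is exactly the Erd\H{o}s--Ford--Tenenbaum circle of estimates, and the fact that the two constants $2$ and $\tfrac{2}{\sqrt{e}}$ fail to meet reflects that the true constant for $f(n,n)$ lies between the best available upper and lower divisor-distribution bounds. I expect the lower half — pinning down $\tfrac{2}{\sqrt{e}}$ through the $k$-prime-factor optimisation — to be the more delicate of the two.
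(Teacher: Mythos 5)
Your proposal has a genuine gap, on two levels. At the structural level: this lemma is not proved in the paper at all --- it is quoted as a known result of Erd\H{o}s and Pomerance \cite{3}, and the two bounds are the main theorems of that paper, with substantial proofs. Your write-up reproduces a correct starting point (the bipartite divisibility graph, Hall's theorem, and the deficiency identity $f(n,n) = n + \max_B (C_B - |B|)$), but everything after that is a declaration of intent rather than an argument: the actual choice of $B$, the counting of $B$-admissible integers, and the optimisations that are supposed to produce the constants $\tfrac{2}{\sqrt{e}}$ and $2$ are all deferred with ``I anticipate'' and ``I expect''. Those deferred steps are not routine bookkeeping; they are the entire content of the lemma.

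More concretely, the lower-bound plan as stated cannot work. With $L = \lambda n$, Hall's condition fails only if $C_B > L - n + |B|$, i.e.\ only if the $B$-admissible integers occupy all but a $1/\lambda$-fraction of $(n, n+L]$ --- a set of density $1 - o(1)$ --- with a surplus of at least $|B|$. The family you propose to count, integers $m = ps$ whose ``passage beyond $n$ is carried by a large prime factor'' (so that the divisors of $m$ below $n$ are exactly the divisors of the cofactor $s$, which forces $p > n$), has density only $O(\log \lambda / \log n) = O(\log\log n/\log n)$ in the interval: too sparse by a factor of about $\log n/\log\log n$, so no $B$ built on it can violate Hall's condition. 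Equivalently, on the complementary side you must exhibit $A \subseteq [1,n]$ with $|N(A)| < |A|$, and this is never a single-scale phenomenon: if $A$ lies in one dyadic block $(n/2^{j+1}, n/2^j]$, the map $a \mapsto 2^{j+1}a$ sends $A$ injectively into $(n, 2n] \subseteq (n, n+L]$, so $|N(A)| \ge |A|$ automatically. A Hall-violating set must therefore spread across many scales and exploit collisions among the multiples coming from different scales; that cross-scale collision structure is precisely where the exponent $\sqrt{\log n/\log\log n}$ and the constant $\tfrac{2}{\sqrt{e}}$ come from, and your sketch never touches it. The upper-bound half has the same character: the claim that dyadic-window divisor counts yield exactly the constant $2$ and the factor $\sqrt{\log n}$ is asserted, not derived. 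Since the present paper needs the lemma only as a black box, the honest route is the one the author takes: cite \cite{3}; a self-contained proof would be a paper-length undertaking, not a paragraph of heuristics.
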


Both the lower and the upper bound were already proven by Erd\H os and Pomerance in \mbox{\cite{3}}. With these bounds we are ready to prove Theorem \ref{main}.

\begin{proof}[Proof of Theorem \ref{main}]
With $n$ a sufficiently large integer, define $k := \left \lceil 0.6\sqrt{\frac{\log n}{\log \log n}} \hspace{3pt} \right \rceil$ and choose $\epsilon := \frac{1}{100}$. Using the inequality $\frac{2}{\sqrt{e}} > 1.21$ and the fact that $n$ is sufficiently large, the bounds from Lemma \ref{erdpombounds} then imply, in particular, that

\begin{equation} \label{eqlower}
f(kn, kn) > (2 + \epsilon)k^2n 
\end{equation}

\noindent and 

\begin{equation} \label{equpper}
\epsilon k^2n > f(n, n).
\end{equation}

Combining Equations (\ref{geninq}), (\ref{eqlower}), and (\ref{equpper}) now finishes the proof. Indeed,
\begin{align*}
\max_m f(n, m) &\ge f(n, k^2n) \\
&\ge kn + f(kn, kn) - k^2n \\
&> (2+\epsilon)k^2n - k^2n \\
&= \epsilon k^2n + k^2n \\
&> f(n, n) + \frac{0.36n \log n}{\log \log n}. \qedhere
\end{align*}
\end{proof}

\vskip 20pt\noindent {\bf Acknowledgements.}
The author would like to express his gratitude to Terence Tao for various valuable comments on an earlier draft of this paper, and to Dan Wood for his generosity.

\end{document}